
\documentclass[12pt,leqno]{article}
\usepackage{amssymb,graphics,latexsym}
\usepackage{graphicx}
\usepackage{amsmath,amsthm}
\usepackage{amsfonts,epsfig}


\setlength{\oddsidemargin}{.25in}
\setlength{\evensidemargin}{.25in}
\setlength{\topmargin}{-0.2in}
\setlength{\textheight}{8.7in}
\setlength{\textwidth}{6.0in}


\setlength{\parskip}{.1in}


\theoremstyle{plain}
\newtheorem{theorem}{Theorem}[section]
\newtheorem{corollary}[theorem]{Corollary}
\newtheorem{lemma}[theorem]{Lemma}

\theoremstyle{remark}


\newcommand{\complex}{\mathbb{C}}
\newcommand{\nul}{{\rm nullity\, }}

\newcommand{\rank}{{\rm rank\, }}


\begin{document}



\bibliographystyle{plain}
\title{
Power Partial Isometry Index and Ascent of a Finite Matrix}

\author{
Hwa-Long Gau\thanks{Department of Mathematics,
National Central University, Chung-Li 32001, Taiwan, Republic of China
(hlgau@math.ncu.edu.tw).}
\and
Pei Yuan Wu\thanks{Department of Applied Mathematics, National Chiao Tung University,
Hsinchu 30010, Taiwan, Republic of China (pywu@math.nctu.edu.tw).}
}

\pagestyle{myheadings}
\markboth{Hwa-Long Gau and Pei Yuan Wu}{Power Partial Isometry Index and Ascent of a Finite Matrix}
\maketitle

\begin{abstract}
We give a complete characterization of nonnegative integers $j$ and $k$ and a positive integer $n$ for which there is an $n$-by-$n$ matrix with its power partial isometry index equal to $j$ and its ascent equal to $k$. Recall that the power partial isometry index $p(A)$ of a matrix $A$ is the supremum, possibly infinity, of nonnegative integers $j$ such that $I, A, A^2, \ldots, A^j$ are all partial isometries while the ascent $a(A)$ of $A$ is the smallest integer $k\ge 0$ for which $\ker A^k$ equals $\ker A^{k+1}$. It was known before that, for any matrix $A$, either $p(A)\le\min\{a(A), n-1\}$ or $p(A)=\infty$. In this paper, we prove more precisely that there is an $n$-by-$n$ matrix $A$ such that $p(A)=j$ and $a(A)=k$ if and only if one of the following conditions holds: (a) $j=k\le n-1$, (b) $j\le k-1$ and $j+k\le n-1$, and (c) $j\le k-2$ and $j+k=n$. This answers a question we asked in a previous paper.
\end{abstract}

\emph{Keywords}: Partial isometry, power partial isometry, power partial isometry index, ascent, $S_n$-matrix, Jordan block.

\emph{AMS classification}: 15A18, 15B99.

\section{Introduction} \label{s1}
Let $A$ be an $n$-by-$n$ complex matrix. The \emph{power partial isometry index} $p(A)$ of $A$ is, by definition, the supremum of the nonnegative integers $j$ for which $I, A, A^2, \ldots, A^j$ are all partial isometries. Recall that $A$ is a \emph{partial isometry} if $\|Ax\|=\|x\|$ for all vectors $x$ of $\complex^n$ which are in the orthogonal complement $(\ker A)^{\perp}$ of $\ker A$. The \emph{ascent} $a(A)$ of $A$ is the smallest nonnegative integer $k$ for which $\ker A^k=\ker A^{k+1}$. The relation between these two parameters of $A$ was first explored in \cite{1}. In particular, it was shown in \cite[Corollary 2.5]{1} that $0\le p(A)\le\min\{a(A), n-1\}$ or $p(A)=\infty$. We asked in \cite[Question 3.7]{1} that whether such conditions on $p(A)$ and $a(A)$ guarantee their attainment by some $n$-by-$n$ matrix $A$. In this paper, we show that this is not always the case. It turns out that the situation is more delicate than what we have expected. More precisely, we prove that, for nonnegative integers $j$ and $k$ and a positive integer $n$, there is an $n$-by-$n$ matrix $A$ with $p(A)=j$ and $a(A)=k$ if and only if one of the following three conditions holds: (a) $j=k\le n-1$, (b) $j\le k-1$ and $j+k\le n-1$, and (c) $j\le k-2$ and $j+k=n$. This settles \cite[Question 3.7]{1} completely. The proof of it depends on the special matrix representation, under unitary similarity, of a matrix $A$ for which $A, A^2, \ldots, A^j$ are all partial isometries for a certain $j$, $1\le j\le\infty$ (cf. \cite[Theorems 2.2 and 2.4]{1}). We will review the necessary ingredients from \cite{1} in Section \ref{s2} below. Section \ref{s3} then gives the proof of our main result.

Partial isometries were first studied in \cite{3} and their properties have since been summaried in \cite[Chapter 15]{2}. Power partial isometries were considered first in \cite{4}.

\vspace{5mm}

\section{Preliminaries}\label{s2}
We start with the following result from \cite[Theorem 2.2]{1}

\begin{theorem} \label{t21}
Let $A$ be an $n$-by-$n$ matrix and $1\le j\le a(A)$. Then $A, A^2, \ldots, A^j$ are partial isometries if and only if $A$ is unitarily similar to a matrix of the form
\begin{equation}\label{e1}
A'=\left[\begin{array}{ccccc} 0 & A_1 & & & \\ & 0 & \ddots & & \\ & & \ddots & A_{j-1} & \\ & & & 0 & B\\ & & & & C\end{array}\right] \ on \  \complex^n=\complex^{n_1}\oplus\cdots\oplus\complex^{n_j}\oplus\complex^{m},
\end{equation}
where the $A_{\ell}$'s satisfy $A_{\ell}^*A_{\ell}=I_{n_{\ell+1}}$ for $1\le \ell\le j-1$, and $B$ and $C$ satisfy $B^*B+C^*C=I_m$. In this case, $n_{\ell}=\nul A$ if $\ell=1$, and $\nul A^{\ell}-\nul A^{\ell-1}$ if $2\le \ell\le j$, and $m=\rank A^j$.
\end{theorem}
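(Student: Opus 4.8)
The plan is to prove both implications by exploiting the orthogonal decomposition of $\complex^n$ induced by the kernel chain of $A$. Throughout write $N_\ell=\ker A^\ell$ and recall two standard facts about a partial isometry $T$: first, $T^*T$ is the orthogonal projection onto $(\ker T)^\perp$, so that $T$ preserves not only norms but also inner products of vectors lying in $(\ker T)^\perp$; second, $T$ maps $(\ker T)^\perp$ isometrically onto $\ran T$. Setting $\mathcal L_\ell=N_\ell\ominus N_{\ell-1}$ for $1\le\ell\le j$ (with $N_0=\{0\}$) and $\mathcal K=N_j^\perp$, we obtain the orthogonal decomposition $\complex^n=\mathcal L_1\oplus\cdots\oplus\mathcal L_j\oplus\mathcal K$ with $\dim\mathcal L_1=\nul A$, $\dim\mathcal L_\ell=\nul A^\ell-\nul A^{\ell-1}$, and $\dim\mathcal K=\rank A^j$, matching the asserted sizes $n_1,\dots,n_j,m$. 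The whole theorem reduces to identifying the action of $A$ across the summands of this decomposition.

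For the sufficiency direction I would argue by direct computation. In the given block form each $A_\ell$ is an isometry because $A_\ell^*A_\ell=I$, hence so is every product $A_\ell A_{\ell+1}\cdots A_{\ell'}$ of consecutive factors. A short inspection shows that for $1\le p\le j$ the distinct block-columns of $(A')^p$ have disjoint block-row supports, so $((A')^p)^*(A')^p$ is block diagonal; the diagonal blocks coming from $\mathcal L_{p+1},\dots,\mathcal L_j$ are each $I$ (being $(A_{\ell-p}\cdots A_{\ell-1})^*(A_{\ell-p}\cdots A_{\ell-1})$), those from $\mathcal L_1,\dots,\mathcal L_p$ are $0$, and the $\mathcal K$-block equals $(C^*)^pC^p+\sum_{s=0}^{p-1}(C^*)^sB^*BC^s$. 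Substituting $B^*B=I-C^*C$ makes the sum telescope to $I-(C^*)^pC^p$, so the $\mathcal K$-block is $I$. Thus $((A')^p)^*(A')^p$ is an orthogonal projection and $(A')^p$ is a partial isometry for every $p\le j$.

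The substance is in the necessity direction, where I must show that $A$ respects the decomposition in the precise staircase pattern of \eqref{e1}. Since $A^\ell x=0$ forces $A^{\ell-1}(Ax)=0$, we always have $A(\mathcal L_\ell)\subseteq N_{\ell-1}$; the real point is that the image has no component in $N_{\ell-2}$. For $x\in\mathcal L_\ell$ with $2\le\ell\le j$, note $x\in N_1^\perp$, so $\|Ax\|=\|x\|$, and $x\in N_{\ell-1}^\perp$, so $\|A^{\ell-1}x\|=\|x\|$. Decomposing $Ax=\sum_{i=1}^{\ell-1}y_i$ with $y_i=P_{\mathcal L_i}Ax$ and observing that $A^{\ell-2}$ annihilates $N_{\ell-2}\supseteq\mathcal L_1\oplus\cdots\oplus\mathcal L_{\ell-2}$ while acting isometrically on $\mathcal L_{\ell-1}\subseteq N_{\ell-2}^\perp$, one gets $\|x\|=\|A^{\ell-1}x\|=\|A^{\ell-2}y_{\ell-1}\|=\|y_{\ell-1}\|\le\|Ax\|=\|x\|$. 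Equality throughout forces $Ax=y_{\ell-1}\in\mathcal L_{\ell-1}$, and the isometry of $A$ on $\mathcal L_\ell$ then yields an isometric block $A_{\ell-1}\colon\mathcal L_\ell\to\mathcal L_{\ell-1}$ with $A_{\ell-1}^*A_{\ell-1}=I$. An entirely parallel argument handles $\mathcal K$: for $w\in\mathcal K=N_j^\perp$ write $Aw=\sum_{i=1}^{j}y_i+y_{\mathcal K}$; using that $A^{j-1}$ preserves inner products on $N_{j-1}^\perp$ (so the surviving images $A^{j-1}y_j$ and $A^{j-1}y_{\mathcal K}$ stay orthogonal) together with $\|A^jw\|=\|w\|=\|Aw\|$, a Pythagorean comparison forces $y_1=\cdots=y_{j-1}=0$. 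Hence $A(\mathcal K)\subseteq\mathcal L_j\oplus\mathcal K$, giving the blocks $B$ and $C$, and $\|Bw\|^2+\|Cw\|^2=\|Aw\|^2=\|w\|^2$ yields $B^*B+C^*C=I_m$.

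Finally, choosing orthonormal bases of $\mathcal L_1,\dots,\mathcal L_j,\mathcal K$ and collecting them into a unitary assembles $A$ into exactly the form \eqref{e1} with the stated block sizes. I expect the main obstacle to be the orthogonality (staircase) claim $A(\mathcal L_\ell)\subseteq\mathcal L_{\ell-1}$: the containment in $N_{\ell-1}$ is automatic, but ruling out the lower-order components is precisely where the hypothesis that the powers up to $A^j$ are partial isometries must be invoked, and the norm-squeeze above is the mechanism that converts that hypothesis into the required orthogonality.
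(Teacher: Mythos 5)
Your proposal is correct, but note that this paper does not actually prove Theorem \ref{t21}: it is imported verbatim from \cite[Theorem 2.2]{1} as a preliminary, so there is no in-paper proof to compare against; your argument therefore serves as a self-contained substitute. Both directions of your proof check out. The sufficiency computation is right: the block columns of $(A')^p$ for $p\le j$ do have disjoint block-row supports, the $\mathcal L_\ell$-diagonal blocks of $((A')^p)^*(A')^p$ are $I$ or $0$, and the $\mathcal K$-block telescopes to $I$ under $B^*B=I-C^*C$, so each $(A')^p$ is a partial isometry. The necessity direction is where the content lies, and your norm-squeeze mechanism is sound: for $x\in\mathcal L_\ell$ the chain $\|x\|=\|A^{\ell-1}x\|=\|A^{\ell-2}y_{\ell-1}\|=\|y_{\ell-1}\|\le\|Ax\|=\|x\|$ legitimately uses only that $A$, $A^{\ell-2}$, $A^{\ell-1}$ are partial isometries with kernels $N_1$, $N_{\ell-2}$, $N_{\ell-1}$, and equality kills the components of $Ax$ in $\mathcal L_1,\dots,\mathcal L_{\ell-2}$; the parallel Pythagorean argument for $w\in\mathcal K$, using that $A^{j-1}$ preserves inner products on $N_{j-1}^\perp$ so that $A^{j-1}y_j\perp A^{j-1}y_{\mathcal K}$, correctly forces $y_1=\cdots=y_{j-1}=0$, and then $\|Bw\|^2+\|Cw\|^2=\|w\|^2$ gives $B^*B+C^*C=I_m$ by polarization. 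Two small points you leave implicit but which follow immediately from what you wrote: the final clause of the theorem (that in any representation of the form \eqref{e1} one necessarily has $n_\ell=\nul A^\ell-\nul A^{\ell-1}$ and $m=\rank A^j$) follows from your sufficiency computation, since $((A')^p)^*(A')^p$ being the projection onto the complement of the first $p$ summands shows $\ker (A')^p=\complex^{n_1}\oplus\cdots\oplus\complex^{n_p}$; and the degenerate cases $\ell=2$ and $j=1$, where $A^{\ell-2}$ or $A^{j-1}$ is the identity, should be flagged as trivially consistent with the argument. Incidentally, your approach also makes transparent the role of the hypothesis $j\le a(A)$: it guarantees each $\mathcal L_\ell$ is nonzero, so the block sizes $n_\ell$ are positive, though the decomposition argument itself never uses it.
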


Here, for any $p\ge 1$, $I_p$ denotes the $p$-by-$p$ identity matrix, and, for any matrix $B$, $\nul B$ means $\dim\ker B$.

A consequence of Theorem \ref{t21} is the next result from \cite[Theorem 2.4]{1}.

\begin{theorem} \label{t22}
Let $A$ be an $n$-by-$n$ matrix and $j>a(A)$. Then the following conditions are equivalent:
\begin{enumerate}
\item[\rm (a)] $A, A^2, \ldots, A^j$ are partial isometries,
\item[\rm (b)] $A$ is unitarily similar to a matrix of the form $U\oplus J_{k_1}\oplus\cdots\oplus J_{k_m}$, where $U$ is unitary and $a(A)=k_1\ge\cdots\ge k_m\ge 1$, and
\item[\rm (c)] $A^{\ell}$ is a partial isometry for all $\ell\ge 1$.
\end{enumerate}
\end{theorem}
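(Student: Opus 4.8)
The plan is to prove the cyclic chain of implications $\mathrm{(a)}\Rightarrow\mathrm{(b)}\Rightarrow\mathrm{(c)}\Rightarrow\mathrm{(a)}$. Two of these will be routine. For $\mathrm{(c)}\Rightarrow\mathrm{(a)}$ there is nothing to do, since $A,A^2,\dots,A^j$ are among the powers $A^\ell$, $\ell\ge 1$. For $\mathrm{(b)}\Rightarrow\mathrm{(c)}$ I would write $A^\ell=U^\ell\oplus J_{k_1}^\ell\oplus\cdots\oplus J_{k_m}^\ell$ and observe that each $U^\ell$ is unitary while each power $J_k^\ell$ of a nilpotent Jordan block carries the orthonormal set $\{e_{\ell+1},\dots,e_k\}$ onto $\{e_1,\dots,e_{k-\ell}\}$ and annihilates the rest, hence is a partial isometry; since a finite orthogonal direct sum of partial isometries is again one, every $A^\ell$ is a partial isometry. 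The real content is therefore $\mathrm{(a)}\Rightarrow\mathrm{(b)}$.

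For $\mathrm{(a)}\Rightarrow\mathrm{(b)}$ I would set $k=a(A)$ and first dispose of the degenerate case $k=0$ (then $A$ is an invertible partial isometry, hence unitary, and (b) holds with no Jordan summand). Assuming $k\ge 1$, the powers $A,A^2,\dots,A^k$ are all partial isometries, so Theorem \ref{t21} (with its ``$j$'' taken equal to $k$) lets me replace $A$ by the normal form $A'$ of \eqref{e1}, with isometric blocks $A_\ell^*A_\ell=I$ and $B^*B+C^*C=I_m$ on the last summand $\complex^m$. Since $C^*C=I_m-B^*B\le I_m$, the square matrix $C$ is unitary \emph{precisely} when $B=0$, so the entire reduction comes down to proving that $C$ is unitary.

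The key computation is to evaluate $A'$ on vectors $x=(0,\dots,0,v)$ supported in the bottom summand $\complex^m$. Tracking the block action shows that, for $\ell\ge k$, the component of $A'^{\ell}x$ in the bottom summand is $C^{\ell}v$, while each of the preceding $k$ components is an isometric image (a product of the $A_\ell$'s) of $BC^{s}v$ for $s=\ell-1,\ell-2,\dots,\ell-k$. Hence
\[
\|A'^{\ell}x\|^2=\|C^{\ell}v\|^2+\sum_{s=\ell-k}^{\ell-1}\|B\,C^{s}v\|^2 ,
\]
and the relation $\|Bw\|^2=\|w\|^2-\|Cw\|^2$ coming from $B^*B+C^*C=I_m$ telescopes the sum to the clean identity $\|A'^{\ell}x\|^2=\|C^{\ell-k}v\|^2$, valid for all $\ell\ge k$. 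In particular $\|A'^{k}x\|=\|v\|=\|x\|$, so the partial isometry $A'^{k}$ is isometric on $\complex^m$, forcing $\complex^m\perp\ker A'^{k}$.

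This is the point where the hypothesis $j>a(A)$ is indispensable, and I expect it to be the only real obstacle. The ascent equality $\ker A'^{k}=\ker A'^{k+1}$ upgrades the last containment to $\complex^m\perp\ker A'^{k+1}$; because $k+1\le j$, the matrix $A'^{k+1}$ is \emph{also} a partial isometry and hence isometric on $(\ker A'^{k+1})^{\perp}\supseteq\complex^m$, giving $\|A'^{k+1}x\|=\|v\|$. Comparing this with the telescoped value $\|A'^{k+1}x\|^2=\|Cv\|^2$ yields $\|Cv\|=\|v\|$ for every $v$, so $C$ is an isometry, therefore unitary, and $B=0$. (Knowing only that $A,\dots,A^{k}$ are partial isometries would give just the level-$k$ identity, which does not force $C$ unitary; it is precisely the one extra power $A^{k+1}$, available because $j>a(A)$, that triggers the collapse.) With $B=0$, $A'$ splits as $N\oplus C$ with $C$ unitary and $N$ the nilpotent shift built from the isometries $A_\ell$, where $n_1\ge\cdots\ge n_k\ge 1$. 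A standard further unitary reduction of such a shift---rotating each isometric block to the inclusion of $\complex^{n_{\ell+1}}$ into $\complex^{n_\ell}$, the finite-dimensional instance of the Halmos--Wallen structure of power partial isometries---rewrites $N$ as $J_{k_1}\oplus\cdots\oplus J_{k_{m'}}$, the block-size multiplicities being governed by the differences $n_\ell-n_{\ell+1}$ and the largest block having size $k=a(A)$. Taking $U=C$ then produces exactly the form required in (b) and closes the cycle.
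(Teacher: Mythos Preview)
The paper does not actually prove Theorem~\ref{t22}; it is quoted verbatim from \cite[Theorem~2.4]{1} as a preliminary result, so there is no in-paper proof to compare your attempt against.

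That said, your argument is correct. The implications $\mathrm{(b)}\Rightarrow\mathrm{(c)}\Rightarrow\mathrm{(a)}$ are handled properly, and your $\mathrm{(a)}\Rightarrow\mathrm{(b)}$ step is sound: applying Theorem~\ref{t21} at level $k=a(A)$, the telescoping identity $\|A'^{k+\ell}(0,\dots,0,v)\|^{2}=\|C^{\ell}v\|^{2}$ is exactly the computation carried out in the proof of Lemma~\ref{l32}(a), and your use of the \emph{one extra} partial-isometry power $A^{k+1}$ (available because $j>a(A)$) together with $\ker A'^{k}=\ker A'^{k+1}$ is precisely what forces $\|Cv\|=\|v\|$, hence $C$ unitary and $B=0$. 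The only place you wave your hands is the final ``standard further unitary reduction'' of the nilpotent shift $N$ to $\bigoplus J_{k_i}$; this is indeed routine (choose an orthonormal basis of $\mathbb{C}^{n_k}$, push it up through the isometries $A_{k-1},\dots,A_1$, and complete at each stage), and your remark that $n_1\ge\cdots\ge n_k\ge1$ with the largest block of size $k=a(A)$ is correct since $n_\ell=\nul A^\ell-\nul A^{\ell-1}$ and $a(A)=k$.
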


Here $J_q$ denotes the $q$-by-$q$ \emph{Jordan block}
$$\left[
    \begin{array}{cccc}
      0 & 1 &   &   \\
        & 0 & \ddots &   \\
        &   &  \ddots & 1 \\
        &   &   & 0
    \end{array}
  \right].$$

An easy corollary of the preceding theorem is the following estimate for $p(A)$ from \cite[Corollary 2.5]{1}.

\begin{corollary}\label{c23}
If $A$ is an $n$-by-$n$ matrix, then $0\le p(A)\le\min\{a(A), n-1\}$ or $p(A)=\infty$.
\end{corollary}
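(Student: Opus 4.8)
The plan is to prove the two upper bounds in the finite case separately, using the structure theorems above. First, the lower bound is immediate: since $I=A^0$ is unitary, it is a partial isometry, so $p(A)\ge 0$ directly from the definition. Assume from now on that $p(A)$ is finite and write $j=p(A)$, so that $I,A,\ldots,A^j$ are partial isometries but not every power of $A$ is one. If $j=0$ there is nothing more to prove (as $\min\{a(A),n-1\}\ge 0$ because $n\ge1$), so suppose $j\ge 1$; I will establish $j\le a(A)$ and $j\le n-1$.

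For the bound $j\le a(A)$, I would argue by contradiction. If instead $j\ge a(A)+1$, then in particular $A,A^2,\ldots,A^{a(A)+1}$ are all partial isometries, and since $a(A)+1>a(A)$, Theorem \ref{t22} applies with this exponent: condition (a) holds, hence so does condition (c), namely $A^{\ell}$ is a partial isometry for every $\ell\ge 1$. But then $I,A,A^2,\ldots$ are all partial isometries, forcing $p(A)=\infty$, contrary to assumption. Hence $j\le a(A)$.

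For the bound $j\le n-1$, I would invoke Theorem \ref{t21}, which applies precisely because $1\le j\le a(A)$. It puts $A$, up to unitary similarity, in the block form (\ref{e1}) on $\complex^{n_1}\oplus\cdots\oplus\complex^{n_j}\oplus\complex^m$ with $m=\rank A^j$, so that $n=n_1+\cdots+n_j+m$. Two facts drive the conclusion. First, each $n_\ell\ge 1$ for $1\le\ell\le j$: since $j\le a(A)$, the kernels strictly increase, $\ker A^{\ell-1}\subsetneq\ker A^{\ell}$, whence $n_\ell=\nul A^\ell-\nul A^{\ell-1}\ge 1$ (and $n_1=\nul A\ge1$). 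Second, I claim $m\ge 1$, and here lies the one genuinely delicate point: the zero matrix is \emph{vacuously} a partial isometry, so if $A^j=0$ then $A^\ell=0$ is a partial isometry for all $\ell\ge j$, and together with $I,A,\ldots,A^{j-1}$ this would make every power a partial isometry and give $p(A)=\infty$, again a contradiction. Thus $A^j\ne 0$, i.e.\ $m=\rank A^j\ge 1$. Combining the two facts, $n\ge j+m\ge j+1$, so $j\le n-1$, and therefore $p(A)=j\le\min\{a(A),n-1\}$.

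The step I expect to require the most care is precisely the verification that $m\ge 1$. It is tempting to imagine a nilpotent $A$ with $A^j=0$ having $p(A)=j$, but because $0$ itself counts as a partial isometry, such an $A$ in fact has $p(A)=\infty$. Recognizing that finiteness of $p(A)$ rules out $A^j=0$ (equivalently, forces a nontrivial $\complex^m$ corner in (\ref{e1})) is exactly what separates the two alternatives $p(A)\le\min\{a(A),n-1\}$ and $p(A)=\infty$, and is what lets the dimension count yield $n-1$ rather than merely $n$.
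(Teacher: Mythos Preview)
Your proof is correct. The paper itself does not supply a proof of Corollary~\ref{c23}; it simply quotes the result from \cite[Corollary~2.5]{1} and remarks that it is an easy consequence of Theorem~\ref{t22}. Your derivation of $p(A)\le a(A)$ via Theorem~\ref{t22} is exactly the intended route. For the bound $p(A)\le n-1$ you invoke Theorem~\ref{t21} and count block dimensions, which is perfectly valid; a slightly shorter alternative (closer in spirit to using only Theorem~\ref{t22}) is to note that $a(A)\le n$ always, so if $p(A)=n$ were finite then $a(A)=n$, forcing $A^n=0$, whence every $A^\ell$ with $\ell\ge n$ is the zero matrix and hence a partial isometry, giving $p(A)=\infty$ after all. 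Either way, your identification of the key point---that the zero matrix is vacuously a partial isometry, so finiteness of $p(A)$ rules out $A^{p(A)}=0$---is exactly right.
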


In constructing the examples for our main result, we need the class of $S_n$-matrices. Recall that an $n$-by-$n$ matrix $A$ is said to be of {\em class} $S_n$ if $A$ is a contraction ($\|A\|\equiv\max\{\|Ax\|: x\in\mathbb{C}^n, \|x\|=1\}\le 1$), its eigenvalues all have moduli strictly less than 1, and $\rank(I_n-A^*A)=1$. Such matrices are finite-dimensional versions of the \emph{compressions of the shift} $S(\phi)$ studied first by Sarason \cite{5}, which later featured prominently in the Sz.-Nagy--Foia\c{s} contraction theory \cite{6}. A special example of $S_n$-matrices is the Jordan block $J_n$. In fact, many properties of $J_n$ can be extended to those for the more general $S_n$-matrices. Part (a) of the following theorem from \cite[Proposition 3.1]{1} is one such instance.

\begin{theorem} \label{t24}
Let $A$ be a noninvertible $S_n$-matrix. Then
\begin{enumerate}
\item[\rm (a)] $a(A)$ equals the algebraic multiplicity of the eigenvalue $0$ of $A$,
\item[\rm (b)] $p(A)=a(A)$ or $\infty$, and
\item[\rm (c)] $p(A)=\infty$ if and only if $A$ is unitarily similar to $J_n$.
\end{enumerate}
\end{theorem}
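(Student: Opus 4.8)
The plan is to handle the three parts in the order (a), (c), (b): part (a) supplies the structural fact that drives everything, part (c) is quickest via the results already recorded, and part (b) is where the real work lies.

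For (a), I would first show that the rank-one defect forces $\nul A\le 1$. If $Ax=0$ then $A^*Ax=0$, so $x=(I_n-A^*A)x\in\ran(I_n-A^*A)$; since $\rank(I_n-A^*A)=1$, the subspace $\ker A$ is at most one-dimensional. As $A$ is noninvertible, $\nul A=1$, so the eigenvalue $0$ has geometric multiplicity $1$ and hence a single Jordan block in the Jordan form of $A$. The size of that block is simultaneously the algebraic multiplicity of $0$ and the ascent $a(A)$, which proves (a); along the way one records that $\nul A^{\ell}=\min\{\ell,a(A)\}$ for every $\ell$. For (c), I would invoke Theorem \ref{t22}: if $p(A)=\infty$ then $A,A^2,\ldots$ are all partial isometries, so $A$ is unitarily similar to $U\oplus J_{k_1}\oplus\cdots\oplus J_{k_m}$ with $U$ unitary. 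A unitary summand would contribute eigenvalues of modulus $1$, which an $S_n$-matrix cannot have, so $U$ is absent; and $\nul A=1$ from (a) forces $m=1$, whence $A$ is unitarily similar to a single $J_{k_1}=J_n$. The converse is immediate, since every power of $J_n$ is a partial isometry.

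For (b) I would argue by induction on $n$ using a one-step ``peeling.'' Applying Theorem \ref{t21} with $j=1$ and $n_1=\nul A=1$ writes $A$, up to unitary similarity, as $\begin{pmatrix}0&B\\0&C\end{pmatrix}$ with $B^*B+C^*C=I_m$ and $m=n-1$. Since the eigenvalues of $C$ are among those of $A$ (hence of modulus $<1$) and $\rank(I_m-C^*C)=\rank(B^*B)\le 1$, the block $C$ is itself an $S_{n-1}$-matrix (the defect cannot be $0$, for then $C$ would be unitary, contradicting its eigenvalue moduli). The key computation is that $A^i=\begin{pmatrix}0&BC^{i-1}\\0&C^i\end{pmatrix}$, so the only nontrivial block of $(A^i)^*A^i$ is $(C^{i-1})^*(B^*B)C^{i-1}+(C^i)^*C^i$; substituting $B^*B=I_m-C^*C$ collapses this to $(C^{i-1})^*C^{i-1}$. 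Hence $A^i$ is a partial isometry if and only if $C^{i-1}$ is, which yields $p(A)=p(C)+1$. Combining this with $a(A)=a(C)+1$—read off from $\det(zI_n-A)=z\det(zI_m-C)$ together with part (a)—the inductive hypothesis $p(C)\in\{a(C),\infty\}$ gives $p(A)\in\{a(A),\infty\}$.

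The borderline case needing separate attention is $a(A)=1$, where $C$ is invertible: an invertible contraction that is a partial isometry is unitary and so cannot be an $S_{n-1}$-matrix, forcing $p(C)=0$ and $p(A)=1=a(A)$; the base case $n=1$ (where $A=0=J_1$) is direct. I expect the main obstacle to be the bookkeeping in the peeling step—verifying that $C$ is genuinely $S_{n-1}$ with defect exactly one, and that the ascent drops by exactly one—rather than the central identity for $(A^i)^*A^i$, which simplifies cleanly once $B^*B+C^*C=I_m$ is used. A pleasant by-product of the computation is that every noninvertible $S_n$-matrix is automatically a partial isometry, so the degenerate possibility $p(A)=0$ never arises, in harmony with Corollary \ref{c23}.
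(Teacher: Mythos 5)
First, a point of reference: this paper does not actually prove Theorem \ref{t24} --- it is quoted from \cite[Proposition 3.1]{1}. So your proposal can only be measured against the machinery the paper develops, and in that respect your plan is the natural one: your ``peeling'' identity ($A^i$ is a partial isometry iff $C^{i-1}$ is, hence $p(A)=p(C)+1$ and $a(A)=a(C)+1$) is exactly the $j=1$ case of the paper's Lemma \ref{l32}. Your part (a) is correct as written (the rank-one defect forces $\nul A=1$, hence a single Jordan block at $0$), and so is part (c) (Theorem \ref{t22}, no unimodular eigenvalues, and $\nul A=1$ forcing a single Jordan summand of full size).

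There is, however, one genuine logical flaw in (b): you obtain the block form $\begin{pmatrix}0&B\\0&C\end{pmatrix}$ with $B^*B+C^*C=I_{n-1}$ by ``applying Theorem \ref{t21} with $j=1$,'' but the hypothesis of that theorem for $j=1$ is precisely that $A$ is a partial isometry, which you have not established at that point; you only assert it at the end as a ``pleasant by-product'' of a computation that itself presupposes the decomposition. As written, the argument is circular --- and the fact is not free, since $p(A)\ge 1$ is part of what must be proved (a general matrix with $\nul A=1$ need not be a partial isometry). The gap closes in one line using ingredients you already have in (a): there you showed $\ker A\subseteq\ran(I_n-A^*A)$, and since both subspaces are one-dimensional they coincide; writing $I_n-A^*A=\lambda ww^*$ with $\lambda>0$ and $w$ a unit vector spanning $\ker A$, applying both sides to $w$ and using $Aw=0$ gives $\lambda=1$, so $A^*A=I_n-ww^*$ is the orthogonal projection onto $(\ker A)^{\perp}$, i.e., $A$ is a partial isometry. (Equivalently: decompose $A$ with respect to $\ker A\oplus(\ker A)^{\perp}$ \emph{without} invoking Theorem \ref{t21}; then $I_n-A^*A=[1]\oplus(I_{n-1}-B^*B-C^*C)$, and rank one forces $B^*B+C^*C=I_{n-1}$.) Once this is inserted before the peeling step, the rest of your induction --- $C$ being an $S_{n-1}$-matrix, the collapse of $(A^i)^*A^i$ to $(C^{i-1})^*C^{i-1}$, the ascent count $a(A)=a(C)+1$, the borderline case of invertible $C$, and the base case $n=1$ --- checks out.
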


\vspace{5mm}

\section{Main Result}\label{s3}
The following is the main theorem of this paper.

\begin{theorem} \label{t31}
Let $j$ and $k$ be nonnegative integers and $n$ be a positive integer. Then there is an $n$-by-$n$ matrix $A$ such that $p(A)=j$ and $a(A)=k$ if and only if one of the following conditions holds:
\begin{enumerate}
\item[\rm (a)] $j=k\le n-1$,
\item[\rm (b)] $j\le k-1$ and $j+k\le n-1$, and
\item[\rm (c)] $j\le k-2$ and $j+k=n$.
\end{enumerate}
\end{theorem}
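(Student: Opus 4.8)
The plan is to prove the two implications separately, reducing everything to the normal form (\ref{e1}) of Theorem \ref{t21}. For the necessity (``only if'') direction, note first that finiteness of $p(A)=j$ together with Corollary \ref{c23} already gives $j\le k$ and $j\le n-1$, so the case $j=k$ lands immediately in (a). The substantive case is $j<k=a(A)$, where I would invoke Theorem \ref{t21} with this $j$ to put $A$ into the form (\ref{e1}). Writing $d_\ell=\nul A^\ell-\nul A^{\ell-1}$ for the number of Jordan blocks of size $\ge\ell$ at the eigenvalue $0$, the hypothesis $a(A)=k$ produces a Jordan chain of length $k$, so that $\rank A^j\ge k-j$; this measures how far the nilpotent structure extends past the first $j$ superdiagonal blocks of (\ref{e1}).

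The heart of the necessity proof is the companion inequality $\nul A^j\ge 2j$, equivalently $d_j\ge 2$: there must be at least two Jordan blocks of size $\ge j$. I would argue its contrapositive. If there were only one such block, then every other block, having size $\le j-1$, is annihilated by $A^{j-1}$, and the tail block $C$ of (\ref{e1}) reduces to an $S$-matrix-type single chain; for such a block Theorem \ref{t24}(b) forces $p=a$, so $A^{j+1}$ would still be a partial isometry, contradicting $p(A)=j$. Making this reduction rigorous inside (\ref{e1})---isolating the single chain from its non-orthogonal complement and certifying that $A^{j+1}$ remains a partial isometry---is the step I expect to be the main obstacle. Granting it, $\nul A^j=\sum_{\ell=1}^{j}d_\ell\ge 2j$, and together with $\rank A^j\ge k-j$ this yields $n=\nul A^j+\rank A^j\ge j+k$.

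It then remains to separate (b) from (c) on the boundary $n=j+k$. Equality forces $d_\ell=2$ for $\ell\le j$ and $\rank A^j=k-j$ with no invertible summand, so $C$ in (\ref{e1}) is a single nilpotent Jordan block of size $k-j$. If $j=k-1$ then $k-j=1$, hence $C=0$ and $A$ is nilpotent of index $j+1$; but then $A^{j+1}=0$ is trivially a partial isometry, giving $p(A)\ge j+1$, a contradiction. Therefore $k-j\ge 2$, which is exactly condition (c).

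For sufficiency I would exhibit the matrices directly. In (a) take an $S_n$-matrix with $0$ an eigenvalue of algebraic multiplicity $k$; since $k\le n-1$ it is not unitarily similar to $J_n$, so Theorem \ref{t24} gives $p(A)=a(A)=k$ (and $A=cI$ with $|c|\neq 1$ settles $j=k=0$). In (b), using $p(A_1\oplus A_2)=\min\{p(A_1),p(A_2)\}$ and $a(A_1\oplus A_2)=\max\{a(A_1),a(A_2)\}$, take $A=A_1\oplus J_k\oplus U$ with $A_1$ an $S_{j+1}$-matrix (so $p(A_1)=a(A_1)=j$ by Theorem \ref{t24}) and $U$ unitary of size $n-j-k-1\ge 0$. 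In (c), I would build via (\ref{e1}) a nilpotent matrix whose nonzero Jordan blocks have sizes $k$ and $j$ (a single size-$k$ block when $j=0$), choosing the weights so that $A,\ldots,A^{j}$ are partial isometries while $A^{j+1}$ is not; this final adjustment is possible precisely because $k-j\ge 2$ keeps the size-$k$ chain active at step $j+1$, mirroring the rigidity uncovered in the necessity argument.
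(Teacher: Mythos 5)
You have correctly isolated the crux of the theorem: everything hinges on showing that if $p(A)=j<a(A)$ with $p(A)$ finite, then $\nul A^j\ge 2j$, i.e. $n_j\ge 2$ in the normal form (\ref{e1}). But this is exactly the step you leave unproven (your self-declared ``main obstacle''), and the sketch you offer for it fails as stated. You argue that when there is only one Jordan block of size $\ge j$, the tail $C$ of (\ref{e1}) ``reduces to an $S$-matrix-type single chain,'' so that Theorem \ref{t24}(b) forces $p(C)=a(C)$. Theorem \ref{t24}, however, applies only to matrices of class $S_m$, and when $n_j=1$ all you know about $C$ is that it is a contraction with $\rank(I_m-C^*C)=\rank(B^*B)\le 1$; nothing prevents $C$ from having eigenvalues of modulus $1$ (e.g.\ a unitary direct summand), which the definition of class $S_m$ excludes. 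Moreover, ``isolating the single chain'' via the Jordan form is a similarity operation, while being a partial isometry is only a unitary invariant, so that reduction cannot be carried out as described. The paper's Lemma \ref{l33} fills precisely this hole without any appeal to $S$-matrices: since $p(C)<a(C)$ (Lemma \ref{l32}), $0$ is an eigenvalue of $C$, so one may unitarily triangularize $C$ with zero first column; the identity $B^*B+C^*C=I_m$ then forces the row vector $B$ to be $[e^{i\theta}\ 0\ \cdots\ 0]$, which exhibits $A$ in the form (\ref{e1}) with $j+1$ stages, whence $p(A)\ge j+1$ by Theorem \ref{t21} --- a contradiction. (Alternatively, your route can be patched by first splitting off the unitary part of the contraction $C$ --- unimodular eigenvalues of a contraction reduce it --- and only then applying Theorem \ref{t24} to the remaining genuine $S$-matrix; but that splitting step is missing from your argument.) As it stands, the necessity direction therefore has a genuine gap.

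The rest of your outline is sound and close to the paper. Granting the key inequality, your rank--nullity count $n=\nul A^j+\rank A^j\ge 2j+(k-j)$ and your boundary analysis at $n=j+k$ (forcing $A$ similar to $J_k\oplus J_j$ and excluding $j=k-1$ because $A^{j+1}=0$ would be a partial isometry) reproduce Lemma \ref{l33} correctly. Your sufficiency construction for (a) matches the paper, and your single construction $A_1\oplus J_k\oplus U$ for (b) is in fact a cleaner packaging of the paper's three subcases. For (c), however, you again only assert that suitable ``weights'' exist; that existence is the entire content of case (c) and must be exhibited. It is not hard: with $m=k-j\ge 2$, take $C$ to be the weighted Jordan block with superdiagonal weights $1,\dots,1,1/\sqrt{2}$, so $C$ is similar to $J_m$ but $C^*C=\diag(0,1,\dots,1,1/2)$ is not a projection, giving $a(C)=m$ and $p(C)=0$; then choose a $2\times m$ matrix $B$ with $B^*B=I_m-C^*C$ (which has rank $2$), and Lemma \ref{l32} yields $p(A)=j$ and $a(A)=j+m=k$. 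This is what the paper's explicit matrices in case (c) accomplish. So: right strategy, essentially the paper's, but both the central inequality and the case (c) construction need to be completed before this is a proof.
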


To prove this, we need the next two lemmas.

\begin{lemma}\label{l32}
If $A$ is an $n$-by-$n$ matrix, which is unitarily similar to a matrix $A'$ as in $(\ref{e1})$ with $1\le j\le a(A)$, then $(\mbox{\rm a})$ $p(A)=j+p(C)$, and $(\mbox{\rm b})$ $a(A)=j+a(C)$.
\end{lemma}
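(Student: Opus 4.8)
The plan is to exploit the block upper-triangular structure of $A'$ in (\ref{e1}) to reduce both parameters of $A$ to the corresponding parameters of the single corner block $C$. Since $p(A)$ and $a(A)$ are invariant under unitary similarity, I may work directly with $A'$ throughout. The key observation driving both parts is that the nilpotent ``shift'' part of $A'$ (the superdiagonal blocks $A_1,\ldots,A_{j-1},B$ sitting above the zero diagonal blocks) contributes exactly $j$ to each index, while everything beyond is governed by $C$.

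For part (b), I would compute $\ker (A')^\ell$ directly. Because the $A_\ell$ satisfy $A_\ell^* A_\ell = I_{n_{\ell+1}}$, each $A_\ell$ is injective (indeed an isometry onto its range), so the powers of $A'$ act on the top block-diagonal in the expected nilpotent fashion and the nullities of $(A')^\ell$ for $\ell\le j$ are forced to be the prescribed partial sums, matching the stated formulas $n_\ell=\nul A^\ell-\nul A^{\ell-1}$. Once $\ell$ exceeds $j$, the increments in $\nul(A')^\ell$ are controlled entirely by the growth of $\ker C^{\ell-j}$, because the top nilpotent part has already been exhausted after $j$ steps. Thus $\ker(A')^\ell=\ker(A')^{\ell+1}$ first occurs when $\ker C^{\ell-j}=\ker C^{\ell-j+1}$ first occurs, i.e. when $\ell-j=a(C)$, giving $a(A)=j+a(C)$. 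The bookkeeping here is routine but I would be careful to verify the claimed stabilization of the increments, since $B$ couples the last nilpotent block to the $C$-block and one must check that this coupling does not create extra kernel growth.

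For part (a), I would use Theorem \ref{t21} applied to powers of $A'$. The hypothesis $1\le j\le a(A)$ together with the structure means $A,A^2,\ldots,A^j$ are automatically partial isometries, so $p(A)\ge j$; and by definition $p(A)=j+p(C)$ amounts to showing that for each $i\ge 1$, the power $(A')^{j+i}$ is a partial isometry if and only if $C^i$ is. The clean way to see this is that raising $A'$ to the $j$-th power ``clears'' the nilpotent superdiagonal and leaves a matrix whose nontrivial action is conjugate (via the isometries $A_\ell$) to a block built from $C$, so that the partial-isometry property of $(A')^{j+i}$ is equivalent to that of $C^i$. I would either verify this by a direct computation of $(A')^{j+i}$ and its effect on $(\ker (A')^{j+i})^\perp$, or, more cleanly, by noting that the representation (\ref{e1}) is essentially unique and then reapplying Theorem \ref{t21} and Theorem \ref{t22} to $C$.

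The main obstacle I anticipate is part (a): the superdiagonal blocks are only isometries (satisfying $A_\ell^* A_\ell = I$, not $A_\ell A_\ell^*=I$), so computing $\|(A')^{j+i}x\|$ for $x\perp\ker(A')^{j+i}$ requires tracking how these isometric embeddings interact with $B$ and $C$ rather than simply canceling. I expect to need the precise relations $A_\ell^*A_\ell=I_{n_{\ell+1}}$ and $B^*B+C^*C=I_m$ to show that the norm of $(A')^{j+i}x$ on the orthocomplement of its kernel is preserved exactly when $C^i$ is a partial isometry, and verifying this equivalence in both directions is the technical heart of the lemma. The remaining steps are straightforward linear-algebra verifications.
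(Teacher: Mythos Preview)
Your proposal is correct and follows essentially the same approach as the paper: compute $(A')^{j+\ell}$ explicitly, then use $A_p^*A_p=I_{n_{p+1}}$ and $B^*B+C^*C=I_m$ to reduce both the partial-isometry test and the kernel computation to statements about $C^\ell$. The paper's execution makes precise what you anticipate as the ``technical heart'': rather than your suggested framing of $(A')^{j}$ being conjugate to a block built from $C$, the actual mechanism is a telescoping identity showing that $\bigl((A')^{j+\ell}\bigr)^*(A')^{j+\ell}$ collapses exactly to $(C^\ell)^*C^\ell$ (whence $(A')^{j+\ell}$ is a partial isometry iff $C^\ell$ is), and an analogous inductive use of $B^*B+C^*C=I_m$ to prove $\ker(A')^{j+\ell}=\mathbb{C}^{n_1}\oplus\cdots\oplus\mathbb{C}^{n_j}\oplus\ker C^\ell$.
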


\begin{proof}
For any $\ell\ge 0$, multiplying $A'$ with itself $j+\ell$ times results in
\begin{equation}\label{e2}
A'^{j+\ell}=\left[\begin{array}{cccc}
0 & \cdots & 0 &   (\prod_{p=1}^{j-1}A_p)BC^{\ell}\\
0 & \cdots & 0 &   (\prod_{p=2}^{j-1}A_p)BC^{\ell+1}\\
\vdots &   & \vdots   & \vdots\\
0 & \cdots & 0 &   BC^{j+\ell-1}\\
0 & \cdots & 0 &   C^{j+\ell}
\end{array}\right].
\end{equation}

(a) Note that $A'^{j+\ell}$ is a partial isometry if and only if ${A'^{j+\ell}}^*A'^{j+\ell}$ is an (orthogonal) projection (cf. \cite[Problem 127]{2}), and the latter is equivalent to
\begin{equation}\label{e3}
(\sum_{q=\ell}^{j+\ell-1}{C^q}^*B^*(\prod_{p=q-\ell+1}^{j-1}A_p)^*(\prod_{p=q-\ell+1}^{j-1}A_p)BC^q)+{C^{j+\ell}}^*C^{j+\ell}
\end{equation}
being a projection. Making use of $A_p^*A_p=I_{n_{p+1}}$, $1\le p\le j-1$, and $B^*B+C^*C=I_m$, we can simplify (\ref{e3}) to
\begin{eqnarray*}
&& (\sum_{q=\ell}^{j+\ell-1}{C^q}^*B^*BC^q)+{C^{j+\ell}}^*C^{j+\ell}\\
&=& (\sum_{q=\ell}^{j+\ell-2}{C^q}^*B^*BC^q)+{C^{j+\ell-1}}^*(B^*B+C^*C)C^{j+\ell-1}\\
&=& (\sum_{q=\ell}^{j+\ell-2}{C^q}^*B^*BC^q)+{C^{j+\ell-1}}^*C^{j+\ell-1}\\
&=& (\sum_{q=\ell}^{j+\ell-3}{C^q}^*B^*BC^q)+{C^{j+\ell-2}}^*(B^*B+C^*C)C^{j+\ell-2}\\
&=& \cdots\\
&=& {C^{\ell}}^*C^{\ell}.
\end{eqnarray*}
Thus ${C^{\ell}}^*C^{\ell}$ is a projection, which is equivalent to $C^{\ell}$ being a partial isometry. From these, we conclude that $p(A)=p(A')=j+p(C)$.

(b) For any $\ell\ge 0$, let $s_{\ell}$ (resp., $t_{\ell}$) denote the geometric (resp., algebraic) multiplicity of the eigenvalue 0 of $A^{j+\ell}$, and let $u_{\ell}$ (resp., $v_{\ell}$) be the corresponding multiplicities of 0 of $C^{\ell}$. Obviously, we have $t_{\ell}=t_0$ for all $\ell\ge 0$ and $v_{\ell}=v_1$ for $\ell\ge 1$. We claim that $s_{\ell}=(\sum_{i=1}^j n_i)+u_{\ell}$ for $\ell\ge 0$. Indeed, let $x_1\oplus\cdots\oplus x_j\oplus y$ in $\complex^{n_1}\oplus\cdots\oplus\complex^{n_j}\oplus\complex^m$ be any vector in $\ker A'^{j+\ell}$. From (\ref{e2}), we have $(\prod_{p=q-\ell+1}^{j-1}A_p)BC^qy=0$, $\ell\le q\le j+\ell-1$, and $C^{j+\ell}y=0$. Since $A_p^*A_p=I_{n_{p+1}}$ for $1\le p\le j-1$, we obtain $BC^qy=0$ for $\ell\le q\le j+\ell-1$. Applying $B^*B+C^*C=I_m$ to the vector $C^{j+\ell-1}y$ yields that
$$C^{j+\ell-1}y=B^*(BC^{j+\ell-1}y)+C^*(CC^{j+\ell-1}y)=0+0=0.$$
We may then apply $B^*B+C^*C=I_m$ again to $C^{j+\ell-2}y$ as above to obtain $C^{j+\ell-2}y=0$. Repeating this process inductively, we finally reach $C^{\ell}y=0$, that is, $y$ is in $\ker C^{\ell}$. This shows that $\ker A'^{j+\ell}$ is contained in the subspace $\complex^{n_1}\oplus\cdots\oplus\complex^{n_j}\oplus\ker C^{\ell}$. Since the reversed containment is easily seen to be true, it follows that
$$s_{\ell}=\nul A^{j+\ell}=\nul A'^{j+\ell}=(\sum_{i=1}^j n_i)+u_{\ell}$$
for any $\ell\ge 0$ as claimed. Note that, for any matrix $T$, its ascent is equal to the smallest nonnegative integer $k$ for which the geometric and algebraic multiplicities of the eigenvalue 0 of $T^k$ coincide. Thus
$$u_{a(C)}=v_{a(C)}=\left\{\begin{array}{ll} v_1 \  & \mbox{if } \, a(C)\ge 1,\\ 0 & \mbox{if } \, a(C)=0,\end{array}\right.  \ \mbox{ and } \ u_{a(C)-1}<u_{a(C)}=v_1 \  \mbox{ if } \, a(C)\ge 1.$$
Therefore,
$$s_{a(C)}=\Big(\sum_{i=1}^j n_i\Big)+u_{a(C)}=\left\{\begin{array}{ll} \big(\sum\limits_{i=1}^j n_i\big)+v_1=t_0=t_{a(C)} \  & \mbox{if } \, a(C)\ge 1,\\ \sum\limits_{i=1}^j n_i & \mbox{if } \, a(C)=0,\end{array}\right.$$
where the third equality follows from the upper-triangular block structure of $A'$, and
$$s_{a(C)-1}=\Big(\sum_{i=1}^j n_i\Big)+u_{a(C)-1}<\Big(\sum\limits_{i=1}^j n_i\Big)+v_1=t_0=t_{a(C)-1}  \ \  \mbox{ if } \, a(C)\ge 1.$$
This shows that $j+a(C)$ is the smallest integer $k$ for which the geometric and algebraic multiplicities of the eigenvalue 0 of $A^k$ are equal to each other. Thus $a(A)=j+a(C)$ follows.
\end{proof}

\begin{lemma}\label{l33}
Let $A$ be an $n$-by-$n$ matrix with $p(A)<\infty$.
\begin{enumerate}
\item[\rm (a)] If $p(A)+a(A)>n$, then $p(A)=a(A)$.
\item[\rm (b)] If $p(A)+a(A)=n$, then $p(A)=a(A)$ or $p(A)\le a(A)-2$.
\end{enumerate}
\end{lemma}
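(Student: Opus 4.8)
The plan is to normalize first using Corollary \ref{c23}: since $p(A)<\infty$ we have $p(A)\le a(A)$ and $p(A)\le n-1$, so with $j=p(A)$ and $k=a(A)$ we may assume $j\le k$. When $j=k$ the conclusion $p(A)=a(A)$ asserted (as an option) in both parts holds outright, and the degenerate case $j=0$ is immediate (for part (b) its hypothesis forces $a(A)=n$, whence $n\ge 2$ because $n=1$ would make $A=[0]$ with $p(A)=\infty$, so $p(A)=0\le n-2=a(A)-2$). Thus I would concentrate on the main case $1\le j<k$. Here Theorem \ref{t21} applies and puts $A$, up to unitary similarity, in the form (\ref{e1}), and Lemma \ref{l32} gives $p(C)=p(A)-j=0$ and $a(C)=a(A)-j=k-j\ge 1$, where $C$ is $m$-by-$m$ with $m=\rank A^j\ge 1$ and $n=\big(\sum_{i=1}^{j}n_i\big)+m$.

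Next I would extract the one dimensional inequality that drives everything. From $B^*B+C^*C=I_m$ we get $B^*B=I_m-C^*C$, and since $\rank(B^*B)=\rank B$ with $B$ having $n_j$ rows, this yields $n_j\ge\rank(I_m-C^*C)$. Because the conditions $A_{\ell}^*A_{\ell}=I_{n_{\ell+1}}$ force $n_1\ge n_2\ge\cdots\ge n_j$, every $n_i$ is at least $\rank(I_m-C^*C)$, so $\sum_{i=1}^{j}n_i\ge j\cdot\rank(I_m-C^*C)$.

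The heart of the argument, and the step I expect to be the main obstacle, is the claim that $p(C)=0$ together with $a(C)\ge 1$ forces $\rank(I_m-C^*C)\ge 2$. To prove it I would use the canonical decomposition of the contraction $C$ into its unitary and completely non-unitary parts, $C=C_u\oplus C_c$. Since $I_m-C^*C$ vanishes on the unitary summand, $\rank(I_m-C^*C)=\rank(I_{m_c}-C_c^*C_c)$; if this rank were $0$ then $C$ is unitary and $a(C)=0$, while if it were $1$ then $C_c$ is a noninvertible $S_{m_c}$-matrix, its eigenvalues all having modulus $<1$ because $C_c$ is completely non-unitary, and $0$ being among them since $a(C)=a(C_c)\ge 1$. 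But a direct sum is a partial isometry exactly when each summand is, so $p(C)=p(C_c)$, and Theorem \ref{t24}(b) gives $p(C_c)=a(C_c)$ or $\infty$; as $p(C)=0<\infty$ this would force $a(C_c)=0$, contradicting $a(C_c)\ge 1$. Hence $\rank(I_m-C^*C)\ge 2$.

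With this in hand both parts follow by counting. For part (a), each $n_i\ge 2$ gives $\sum_{i=1}^{j}n_i\ge 2j$, and since the ascent of the $m$-by-$m$ matrix $C$ satisfies $a(C)\le m$, we obtain $p(A)+a(A)=2j+a(C)\le\big(\sum_{i=1}^{j}n_i\big)+m=n$; thus $p(A)+a(A)>n$ cannot occur once $p(A)<a(A)$, which is the contrapositive of (a). For part (b), suppose $p(A)+a(A)=n$ while $p(A)=a(A)-1$, that is $a(C)=1$; then $m\ge 2$ (an $m=1$ matrix of ascent $1$ is $[0]$, forcing $p(C)=\infty$), so $n=\big(\sum_{i=1}^{j}n_i\big)+m\ge 2j+2>2j+1=p(A)+a(A)$, a contradiction. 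Hence the gap $a(A)-p(A)=a(C)$ cannot equal $1$, giving $p(A)\le a(A)-2$.
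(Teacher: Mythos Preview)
Your proof is correct, and it reaches the same conclusions as the paper via a genuinely different key step. Both arguments reduce, through Theorem~\ref{t21} and Lemma~\ref{l32}, to understanding the block $C$ when $p(C)=0$ and $a(C)\ge 1$. The paper argues structurally: if $n_j=1$ and $a(C)\ge 1$, it upper-triangularizes $C$ with a zero first column, reads off from $B^*B+C^*C=I_m$ that $B=[e^{i\theta}\ 0\ \cdots\ 0]$, and observes that the block pattern of $A'$ then extends by one more step, contradicting $p(A)=j$; the counting inequalities $n\ge 2j+(a(C)-p(C))$ and a small case split (whether $a(C)-p(C)\le m-1$ or $=m$) finish both parts. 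You instead prove the single inequality $\rank(I_m-C^*C)\ge 2$ by invoking the unitary/c.n.u.\ decomposition $C=C_u\oplus C_c$ together with Theorem~\ref{t24} for $S_{m_c}$-matrices, which immediately forces $n_j\ge\rank B=\rank(I_m-C^*C)\ge 2$ and hence $\sum_i n_i\ge 2j$; both (a) and (b) then drop out of the same dimension count. Your route is more conceptual and unifies the two parts cleanly at the cost of appealing to Theorem~\ref{t24}, while the paper's route is more hands-on and self-contained, needing only the block form and elementary manipulations. Both are equally valid.
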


\begin{proof}
By Theorem \ref{t21}, $A$ is unitarily similar to a matrix $A'$ in (\ref{e1}) with $j=p(A)$. In particular, this implies that $n_1\ge n_2\ge\cdots\ge n_j\ge 1$ for if $n_j=\nul A^j-\nul A^{j-1}=0$, then we would have $\ker A^j=\ker A^{j-1}$, which yields the contradictory $p(A)\le a(A)\le j-1$ by Corollary \ref{c23}.

(a) Assuming $p(A)+a(A)>n$, we first check that $n_j=1$. Indeed, if otherwise $n_j\ge 2$, then $n_i\ge 2$ for all $i$, $1\le i\le j$. Making use of Lemma \ref{l32}, we have
\begin{eqnarray*}
&& n=\Big(\sum_{i=1}^j n_i\Big)+m\ge 2j+(a(C)-p(C))\\
&=& 2p(A)+(a(A)-p(A))=p(A)+a(A)>n,
\end{eqnarray*}
which is a contradiction. Thus $n_j=1$. This means that $B$ is a $1$-by-$m$ matrix. If $p(A)<a(A)$, then $p(C)<a(C)$ by Lemma \ref{l32} again. In particular, this says that $a(C)>0$ or 0 is an eigenvalue of $C$. After a unitary similarity, we may assume that $C$ is upper triangular with a zero first column. Let $C$ be partitioned as
$$\left[\begin{array}{cc} 0 & C_1\\ 0 & C_2\end{array}\right],$$
where $C_1$ (resp., $C_2$) is a $1$-by-$(m-1)$ (resp., $(m-1)$-by-$(m-1)$) matrix. We deduce from $B^*B+C^*C=I_m$ that $B=[e^{i\theta} \ 0 \ \ldots \ 0]$ for some real $\theta$ and $C_1^*C_1+C_2^*C_2=I_{m-1}$. Thus $A'$ is of the form
$$
\left[\begin{array}{cccccc} 0 & A_1 & & & & \\ & 0 & \ddots & & & \\ & & \ddots & A_{j-1} & & \\ & & & 0 & e^{i\theta} & 0 \ \ldots \ 0\\ & & & & 0 & C_1\\ & & & & & C_2\end{array}\right] \ \mbox{ on } \  \complex^n=\complex^{n_1}\oplus\cdots\oplus\complex^{n_j}\oplus\complex\oplus\complex^{m-1}.$$
Theorem \ref{t21} then leads to the contradictory $p(A)=p(A')>j=p(A)$. We conclude from Corollary \ref{c23} that $p(A)=a(A)$.

(b) Assume that $p(A)+a(A)=n$ and $p(A)=a(A)-1$. We consider two separate cases, both of which will lead to contradictions:

(i) $a(C)-p(C)\le m-1$. In this case, we proceed as in (a) to first prove that $n_j=1$. Indeed, if $n_j\ge 2$, then
\begin{eqnarray*}
&& n-1=\Big(\sum_{i=1}^j n_i\Big)+m-1\ge 2j+(a(C)-p(C))\\
&=& 2p(A)+(a(A)-p(A))=a(A)+p(A)=n,
\end{eqnarray*}
which is a contradiction. Hence $n_j=1$ and $B$ is a $1$-by-$m$ matrix. Then, since $p(A)<a(A)$, the second-half arguments in proving (a) yield that $p(A)=a(A)$, which contradicts our assumption of $p(A)=a(A)-1$.

(ii) $a(C)-p(C)=m$. Note that this can happen only when $a(C)=m$ and $p(C)=0$. Thus $m=a(C)-p(C)=a(A)-p(A)=1$ by Lemma \ref{l32} and our assumption. This shows that $C$ is a 1-by-1 matrix, say, $C=[c]$ with $a(C)=1$ and $p(C)=0$. The former condition $a(C)=1$ yields that $c=0$, which results in $p(C)=\infty$, contradicting the latter $p(C)=0$.

We conclude that $p(A)+a(A)=n$ implies $p(A)\neq a(A)-1$.
\end{proof}

Finally, we are ready to prove Theorem \ref{t31}.

{\em Proof of Theorem $\ref{t31}$}.
The existence of an $n$-by-$n$ matrix $A$ with $p(A)=j$ and $a(A)=k$ implies, by Corollary \ref{c23}, that $j\le\min\{k, n-1\}$. Lemma \ref{l33} then yields that one of (a), (b) and (c) must hold.

For the converse, assume that (a) holds. If $j=k=0$, then $A=(1/2)I_n$ will do. Otherwise, we have $1\le j=k\le n-1$. Let $A$ be a noninvertible $S_n$-matrix whose eigenvalue 0 has algebraic multiplicity $k$. Then Theorem \ref{t24} gives $p(A)=j=k=a(A)$.

Next assume that (b) holds. We consider three cases separately:

(i) If $j=0$ and $k=n-1$, then let $A=[1/2]\oplus J_{n-1}$. In this case, we have $p(A)=0=j$ and $a(A)=n-1=k$.

(ii) If $1\le j\le k=n-1-j$, then let $A=A_1\oplus J_k$, where $A_1$ is an $S_{n-k}$-matrix with the algebraic multiplicity of its eigenvalue 0 equal to $j$. Since $j=n-k-1$, we have, by Theorem \ref{t24},
$$p(A)=\min\{p(A_1), p(J_k)\}=\min\{j, \infty\}=j$$
and
$$a(A)=\max\{a(A_1), a(J_k)\}=\max\{j, k\}=k.$$

(iii) If $j+k\le n-2$, then let $A=A_1\oplus A_2$, where $A_1$ (resp., $A_2$) is an $S_{j+1}$-matrix (resp., $S_{n-j-1}$-matrix) with the algebraic multiplicity of its eigenvalue 0 equal to $j$ (resp., $k$). Since $j<j+1$ (resp., $k<n-j-1$), we have, by Theorem \ref{t24}, $p(A_1)=a(A_1)=j$ (resp., $p(A_2)=a(A_2)=k$). Thus
$$p(A)=\min\{p(A_1), p(A_2)\}=\min\{j, k\}=j$$
and
$$a(A)=\max\{a(A_1), a(A_2)\}=\max\{j, k\}=k.$$

Finally, if (c) holds, then two cases are to be considered:

(i) $j=k-2$. In this case, let
$$A=\left[\begin{array}{ccccc} 0 & I_2 & & & \\ & 0 & \ddots & & \\ & & \ddots & I_2 & \\ & & & 0 & B\\ & & & & C\end{array}\right] \ \ \mbox{on} \  \ \complex^n=\underbrace{\complex^{2}\oplus\cdots\oplus\complex^{2}}_j\oplus\complex^{2},$$
where $B={\scriptsize\left[\begin{array}{cc} 1 & 0\\ 0 & 1/\sqrt{2}\end{array}\right]}$ and $C={\scriptsize\left[\begin{array}{cc} 0 & 1/\sqrt{2}\\ 0 & 0\end{array}\right]}$. Since $n=j+k=j+(j+2)=2j+2$, $A$ is indeed an $n$-by-$n$ matrix with $B^*B+C^*C=I_2$. We infer from Lemma \ref{l32} (a) (resp., (b)) that
$$p(A)=j+p(C)=j+0=j \ \ \ (\mbox{resp.}, \  a(A)=j+a(C)=j+2=k).$$

(ii) $j\le k-3$. Let $m=k-j\ge 3$, and let
$$A=\left[\begin{array}{ccccc} 0 & I_2 & & & \\ & 0 & \ddots & & \\ & & \ddots & I_2 & \\ & & & 0 & B\\ & & & & C\end{array}\right] \ \ \mbox{on} \  \ \complex^n=\underbrace{\complex^{2}\oplus\cdots\oplus\complex^{2}}_j\oplus\complex^{m},$$
where
$$\begin{array}{l}\vspace*{-1.5mm}\ \ \ \ \hspace{21mm} \ \ \ \ \overbrace{\ \hspace{13mm} \ }^{\displaystyle m-3}\\
B=\left[\begin{array}{cccccc} 1 & 0 & 0 & \cdots &  0 & 0\\ 0 & 1/\sqrt{2} & 0 &  \cdots &  0 & 1/2\end{array}\right]\end{array}$$
and
$$C=\begin{array}{l}\vspace*{-1.5mm}\ \ \ \ \hspace{16mm} \ \ \ \ \overbrace{\ \hspace{21mm} \ }^{\displaystyle m-3} \\
\left[
       \begin{array}{ccccccc}
         0 & -1/\sqrt{2} & 0 & 0 & \cdots & 0 & 1/2 \\
           & 0  & 1 & 0 & \cdots & 0 & 0  \\
           &   & 0  & \ddots & \ddots  & \vdots  & \vdots  \\
           &   &   &  \ddots & \ddots & 0 & 0 \\
           &   &   & & \ddots  &  1 & 0   \\
           &   &   & &  & 0 &  1/\sqrt{2}  \\
           &   &   &   &   &  & 0
       \end{array}
     \right]. \end{array}$$
Since $n=j+k=2j+m$, $A$ is an $n$-by-$n$ matrix with $B^*B+C^*C=I_m$. Again, we infer from Lemma \ref{l32} (a) that
$$p(A)=j+p(C)=j+0=j,$$
where the second equality follows from the fact that $C^*C$ is not a projection and hence $C$ is not a partial isometry. On the other hand, Lemma \ref{l32} (b) implies that
$$a(A)=j+a(C)=j+m=j+(k-j)=k,$$
where the second equality holds because $C$ is similar to $J_m$. \hfill\qed

\bigskip
{\bf Acknowledgements.} The two authors acknowledge the supports from the National Science Council of the Republic of China under NSC-102-2115-M-008-007 and NSC-102-2115-M-009-007, respectively. The second author was also supported by the MOE-ATU project.



\begin{thebibliography}{1}
\bibitem{1}
H.-L. Gau and P. Y. Wu.
\newblock  Structures and numerical ranges of power partial isometries.
\newblock  arXiv: 1310.4952.

\bibitem{2}
P. R. Halmos.
\newblock  {\em A Hilbert Space Problem Book}.
\newblock  2nd ed., Springer, New York, 1982.

\bibitem{3}
P. R. Halmos and J. E. McLaughlin.
\newblock  Partial isometries.
\newblock  {\em Pacific J. Math.}, 13:585--596, 1962.

\bibitem{4}
P. R. Halmos and L. J. Wallen.
\newblock  Powers of partial isometries.
\newblock  {\em J. Math. Mech.}, 19:657--663, 1970.

\bibitem{5}
D. Sarason.
\newblock  Generalized interpolation in $H^{\infty}$.
\newblock  {\em Trans. Amer. Math. Soc.}, 127:179--203, 1967.

\bibitem{6}
B. Sz.-Nagy, C. Foia\c{s}, H. Bercovici and L. K\'{e}rchy.
\newblock  {\em Harmonic Analysis of Operators on Hilbert Space}.
\newblock  2nd ed., Springer, New York, 2010.

\end{thebibliography}
\end{document}